\providecommand\@dotsep{5}
\newtheorem{thm}{Theorem}
\newtheorem{lem}[thm]{Lemma}
\newtheorem{cor}[thm]{Corollary}
\theoremstyle{definition}
\newcommand{\R}{\mathbb{R}}
\newcommand{\N}{\mathbb{N}}
\newcommand{\g}{\tilde{g} }
\newcommand{\m}{\mathcal}
\newcommand{\M}{\mathfrak{M}}
\DeclareMathOperator{\supp}{supp}
\begin{document}

\title{On finite   quotient Aubry set  for generic   geodesic flows}
 
\author{Gonzalo Contreras}
\address{CIMAT \\
          A.P. 402, 36.000 \\
          Guanajuato. GTO \\
          M\'exico.}
\email{gonzalo@cimat.mx}

\author{Jos\'e  Ant\^onio G. Miranda}
\address{Universidade Federal de Minas Gerais,
Av. Ant\^onio Carlos 6627, 
31270-901, Belo Horizonte,
MG, Brasil.}
\email{jan@mat.ufmg.br}

\subjclass[2000]{ 37C40, 37C50, 37C99}

\begin{abstract}
We study  the structure of the Mather and  Aubry sets  for  the family  of lagrangians   given by  the kinetic energy associated to a riemannian metric $ g$  on a closed manifold $ M$. 
In this case the Euler-Lagrange flow is the geodesic flow of $( M,g)$. 
We prove that there exists a residual subset $ \m G$  of the set  of all conformal  metrics to g, such that, if $ \overline g \in \m G$ then the corresponding   geodesic flow has a finitely many ergodic c-minimizing measures, for each non-trivial cohomology class $ c \in H^1(M,\mathbb{R})$. 
This implies that,  for any $ c \in H^1(M,\mathbb{R})$,  the quotient Aubry set for the cohomology class c  has a finite number of elements for this particular family of lagrangian systems. 
 \end{abstract}

\maketitle 

\section{Introduction}

Let $ M $ be a closed smooth manifold  
endowed with a  $C^\infty$-riemannian metric $ g $ on $ M $ and let 
  $L: TM \to \R $ be the  lagrangian  defined as the kinetic  energy  corresponding to g, i.e, $$ L(x,v) = \frac{1}{2} g_x(v,v) .$$ 
Then the  Euler-Lagrange equation for this  particular class of Tonelli lagrangian, that will be called {\it riemannian lagrangian},  coincides with the  equation of the  geodesics on  $(M, g)$:   
\begin{equation}
\label{geodesic eq.}
\frac{D}{dt}\dot{\gamma}(t)=0.
\end{equation}
Therefore the corresponding Euler-Lagrange flow   $\phi_t: TM\to TM  $ is  the geodesic flow of the riemannian manifold $(M, g)$.

\smallskip 

In this paper, we  study  generic   properties of  $c$-minimizing measures for the class of riemannian lagrangians by   performing conformal perturbations of the metric $g$, that  do not depend of the  non-trivial cohomology class $ c \in H^1(M,\R)$. 
Recall that a metric $ \tilde g $ is  conformally equivalent  to $ g $,  if there exist a  smooth function $ f: M \to \R $,  such that $ \tilde g_x(\cdot,\cdot ) = e^{f(x)} g_x(\cdot,\cdot) $ for all $ x \in M$.

\bigskip

The concept of minimizing measures was introduced 
by  Mather in \cite{Mather:1991} for time periodic Tonelli lagrangian 
(see section~\ref{Review of the Mather-Aubry Theory}). In this setting,
 in  \cite{Mane:1996a},  Mañé proved   the existence of  a residual set $ \m O(c)$  in $ C^\infty (M) $ (that depends on $ c \in H^1(M,\R)$), 
such that, 
$ u \in \m O(c) $ implies that the perturbed lagrangian $ L- u$  has a unique c-minimizing measure, 
and he  asked  the question about  the  existence of  a residual set $\m O $ in $ C^\infty (M) $  such that any    lagrangian $ L - u$,
with  $u \in \m O$,  
has a unique c-minimizing measure for each  $c \in H^1(M,\R)$ .
In that direction, 
 Bernard and  Contreras in \cite{Bernard:2008b} obtained a residual  $\m O \subset C^\infty (M) $, 
such that any lagrangian $ L - u$,
with $ u \in \m O$,  
has at most   $ 1+ \dim  H^1(M,\R) $  ergodic  c-minimizing measures for  every $c \in H^1(M,\R)$. 
 
\smallskip

In the case where $ L $ is a mechanical lagrangian, that is 
$$ L (x, v) = \frac{1}{2} g_x (v, v) -u (x),$$ 
 it follows from  Carneiro's theorem (see \cite[Theorem 1] {Carneiro:1995}), that  the support of a c-minimizing measure of $ L $ is contained in the level of value  $ k =  \alpha(c) $ of the energy function $E_L=\partial_v L \cdot v-L$.
Applying    the Maupertuis' principle (see for example \cite[pg 247]{Arnold:1989})  to the mechanical lagrangian  $ L $,  we have that
the restriction of the Euler-Lagrange flow of  $ L $ to the energy level $E_{L}^{-1}(k)$, for   
$$ k > \min\{ \alpha (c) : c\in H^1(M,\R)\} =\alpha(0)= \max\{ u(x): x\in M\},$$   is a reparametrization of a  new geodesic flow for  a metric   conformally equivalent  to $ g$, but its corresponding   conformal factor  obligatorily changes when we change  the  energy  levels of $ k$.  

\smallskip

So, fixing  a  cohomology   $ c \in H^1(M,\R)$, from  the Carneiro-Maupertuis arguments  applied  to the family of mechanical lagrangians   obtained   through  perturbations of $ L $ by  summing   potentials in  the    residual subset  given by Mañé's  theorem, it follows that we can  obtain a residual set $ \m G(c) \in C^\infty(M)$,  such that, if $ f \in  \m G(c)$  then the   lagrangian   $\tilde{L}(x,v)= e^{f(x)} L(x,v) $  has a unique $c$-minimizing measure. 
 Note that,  if we want  to   study a property of  c-minimizing measures for a mechanical lagrangian,  that is independent of  $ c \in H^1(M,\R)$, we necessarily need to consider  an uncountable  set of energy values.  Thus, we will not be able to obtain a residual set of  metrics conformally equivalent to $ g$  using the arguments of   Carnerio-Maupertuis type on  the  generic family of mechanical lagrangians given  by the Bernard-Contreras' theorem.

 \bigskip 
 
 We will prove a analogous result of  \cite{Bernard:2008b} for the class of riemannian lagrangian by defining  an appropriate setting where we are able to apply an abstract result, proved in \cite{Bernard:2008b}, and to show a separability property (see Lemma~\ref{injective}) for  non-trivial   cohomology classes.
Hence we will  prove the following theorem. 

\begin{thm}
\label{Th. unique  miminizing measure} \  
Let  $(M,g)$ be a closed riemannian manifold with   $ \dim(H_1(M,\R)) \geq 1 $. 
There exists a residual subset $ \m G \ \subset C^\infty(M)$,  such that, 
if $$  \tilde{L}(x,v)=\frac{ 1}{2}\  e^{f(x)}\ g_x(v,v) $$
with  $ f \in \m G$,
 then for each  non-trivial cohomology  class $c \in H^1(M,\R)$ there exist at most 
 $ 1 + \dim H^1( M,\R)$ ergodic  c-minimizing measures 
for  $\tilde L$.
\end{thm}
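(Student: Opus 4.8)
The plan is to recast the conformal family $\{\tilde L_f\}$ so that the abstract genericity theorem of \cite{Bernard:2008b} applies, with the conformal factor $f\in C^\infty(M)$ playing the role of the perturbation parameter in place of an additive potential. The first step is to compute the first variation of the Mather $\alpha$-function in a conformal direction. Writing $\tilde L_{f+th}=e^{th}\,\tilde L_f$, so that $\partial_t|_{t=0}\tilde L_{f+th}=h\,\tilde L_f$, the envelope (first-variation) formula for minimizing measures gives, for any $c$-minimizing measure $\mu$ of $\tilde L_f$,
$$\frac{d}{dt}\Big|_{t=0}\alpha_{f+th}(c)=-\int_{TM}h\,\tilde L_f\,d\mu.$$
The crucial simplification comes from the homogeneity of the kinetic energy: since $E_{\tilde L_f}=\tilde L_f$, every $c$-minimizing measure is supported on a single energy level, $\supp\mu\subset E_{\tilde L_f}^{-1}(\alpha_f(c))=\{\tilde L_f=\alpha_f(c)\}$, whence
$$\frac{d}{dt}\Big|_{t=0}\alpha_{f+th}(c)=-\alpha_f(c)\int_{M}h\,d(\pi_*\mu),$$
where $\pi:TM\to M$ is the footpoint projection. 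Thus a conformal perturbation acts on a $c$-minimizing measure exactly as an additive potential acts on its projection $\pi_*\mu$, up to the scalar factor $\alpha_f(c)$.

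This identity forces the restriction to non-trivial cohomology classes. For the kinetic energy Lagrangian one has $\alpha_f(0)=\min\alpha_f=0$, with the $0$-minimizing measures carried by the zero section, where $\tilde L_f$ vanishes; there the conformal variation above is identically zero, so conformal perturbations cannot influence minimizing measures for $c=0$. By contrast, for $c\neq 0$ one has $\alpha_f(c)>0$, the scalar factor is invertible, and conformal perturbations become genuinely effective; the hypothesis $\dim H_1(M,\R)\geq 1$ guarantees the existence of such classes.

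Next I would establish the separation property, Lemma~\ref{injective}: distinct ergodic $c$-minimizing measures have distinct projections under $\pi_*$. This is where Mather's graph theorem enters—the Mather set $\widetilde{\M}_c$ is a Lipschitz graph over its projection, so $\pi$ restricts to a homeomorphism of $\widetilde{\M}_c$ onto $\pi(\widetilde{\M}_c)$, and therefore $\pi_*$ is injective on measures supported there. Combined with the first-variation formula, this shows that the linear functionals $h\mapsto\int_M h\,d(\pi_*\mu)$, $h\in C^\infty(M)$, separate the ergodic $c$-minimizing measures for every $c\neq 0$; that is, the conformal directions see the full simplex of minimizing measures, which is precisely the nondegeneracy hypothesis needed to run the abstract argument.

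With these two ingredients in place, the proof concludes by verifying the hypotheses of the abstract theorem of \cite{Bernard:2008b} for the parametrized family $f\mapsto(c\mapsto\alpha_f(c))$ of convex functions on $H^1(M,\R)$: the subdifferential of $\alpha_f$ at $c$ is identified, via the computation above, with the set of projected $c$-minimizing measures, the conformal parameter separates its extreme points, and $\dim H^1(M,\R)$ is finite. The abstract theorem then produces a residual set $\m G\subset C^\infty(M)$ for which, at every $f\in\m G$ and every non-trivial $c$, this subdifferential has at most $1+\dim H^1(M,\R)$ extreme points, i.e. $\tilde L_f$ has at most $1+\dim H^1(M,\R)$ ergodic $c$-minimizing measures. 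I expect the main obstacle to be precisely the passage from the additive setting of \cite{Bernard:2008b} to the multiplicative conformal one: one must check that the $c$-dependent rescaling by $\alpha_f(c)$ (bounded away from $0$ and $\infty$ only after staying away from $c=0$) does not destroy the uniformity needed for the Baire-category construction, and that a single residual set of conformal factors realizes the separation of Lemma~\ref{injective} simultaneously for all non-trivial $c$.
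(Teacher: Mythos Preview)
Your approach is essentially the paper's: the same two lemmas (graph theorem $\Rightarrow$ injectivity of the projection on minimizing measures; $\alpha(c)>0$ for $c\neq 0$) feed the same abstract Bernard--Contreras theorem, and the restriction to $c\neq 0$ arises for exactly the reason you give.

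The one place where the paper is cleaner is the choice of the linear map $\pi$ in the abstract framework. You project via $\mu\mapsto\pi_*\mu$ and then invoke the energy-level identity $\tilde L_f=\alpha_f(c)$ to relate the conformal variation to $\int h\,d(\pi_*\mu)$; this is what forces the $c$-dependent factor $\alpha_f(c)$ into the picture and generates your closing worry about uniformity. The paper instead takes $\pi_g(\mu)(u):=\int u(x)\,g_x(v,v)\,d\mu$, so that the perturbed functional $\mu\mapsto\int(L-\omega)\,d\mu+\langle u,\pi_g(\mu)\rangle$ is \emph{literally} the action of the conformal Lagrangian $\tfrac12(1+2u)g-\omega$, with no rescaling and no $c$-dependence in the map $\pi_g$ at all. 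The factor $\alpha(c)$ then appears only inside the proof of injectivity (your Lemma~\ref{injective}), where it merely needs to be nonzero, and your uniformity concern evaporates. Relatedly, your phrasing ``the subdifferential of $\alpha_f$ at $c$ is identified with the set of projected $c$-minimizing measures'' is not quite right---the subdifferential of $\alpha$ lives in $H_1(M,\R)$, not in the space of measures on $M$---and is not how the abstract theorem is stated; the paper applies Theorem~5 of \cite{Bernard:2008b} directly to the affine family $\{A_c:c\in H^1(M,\R)\}$ of linear functionals on holonomic measures, which is what you should do as well.
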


 In Mather's studies on the dynamics of Tonelli lagrangian systems and the existence of Arnold diffusion, he found that understanding certain aspects of  the Aubry sets seems to help in the construction of orbits with interesting behavior.
  In particular, it seems useful to know that what we call  the quotient Aubry  set for cohomology $ c$,  and denote by $ (\overline {\m A}_L (c), \overline {\delta}_c) $,   is a totally disconnected metric space, that is, every connected component consists of a single point.  
  In \cite{Mather:2004}, Mather showed examples of mechanical lagrangians on $ T\mathbb{T}^d$, with $ d \geq2$ and the potential $u \in  C^{2d-3,1-\epsilon}(\mathbb{T}^d)$, whose  a  quotient Aubry set is isometric to an closed interval.  
  In \cite{Sorrentino:2007}, Sorrentino proved that Mather's counterexamples   are optimal,  he proved that, for every mechanical lagrangian on a closed manifold with a sufficiently regular potential, the quotient Aubry set corresponding to the zero cohomology class is totally disconnected. 
  Analogous result had been proved independently  by Fathi, Figalli and Rifford in \cite{FFR:2009}. Note that, the Bernard-Contreras' theorem imply that for generic lagrangian, in the Mañé sense, the quotient Aubry set $ (\overline {\m A}_L (c), \overline {\delta}_c) $ has at most a finite number of elements, for every $ c \in H^1 (M,\R) $. 
   This follows from the  fact that the elements of the quotient Aubry set,  that are usually  called (projected) static classes, are disjoint subsets and  support at least one ergodic minimizing measure (see \cite{Contreras:1997}).  
   
   The Arnold diffusion question is usually stated as perturbation with time periodic 
   potentials and looking for orbits which ``visit'' prescribed homology classes and
   whose energy goes to infinity. But the question still holds for autonomous or 
   riemannian lagrangians if the diffusion is restricted to a fixed energy level. 
   A result of Contreras-Paternain \cite{CP}, obtaining heteroclinics between static
   classes may be useful in that context. The hypothesis in \cite{CP} is, precisely,
   the finitness of the quotient Aubry set.

In the case  of lagrangians given by the kinetic energies  of riemannian metrics on  M, it is easy to see that, for the zero cohomology class the associated (projected) Aubry set  is $ M$ and the quotient Aubry set is a single point.  
Mather proved in \cite[Prop. 2]{Mather:2003} that, if $\dim M \leq 3 $ and  $ c \in H^1(M,\R)$
 then $ (\overline {\m A}_L (c), \overline {\delta}_c) $ is totally disconnected.
 Then, by Theorem~\ref{Th. unique  miminizing measure} and the above observations, we have the following corollary. 
 
\smallskip

\begin{cor}
\label{finite quotient Aubry set}
Let  $(M,g)$ be a closed Riemannian Manifold. 
There exists a residual subset $ \m G \ \subset C^\infty(M)$,  such that, if 
 $$  \tilde{L}(x,v)=\frac{ 1}{2}\  e^{f(x)}\ g_x(v,v) $$
with  $ f \in \m G$,
 then, for any $ c \in H^1(M,\R)$,  the quotient Aubry set  $ (\overline {\m A}_{\tilde L} (c), \overline {\delta}_c) $ has a finite number of elements.
\end{cor}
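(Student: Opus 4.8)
The plan is to deduce the corollary directly from Theorem~\ref{Th. unique  miminizing measure}, taking $\m G$ to be the \emph{same} residual subset produced there. (When $\dim H^1(M,\R)=0$, so that the only cohomology class is $c=0$, one simply takes $\m G = C^\infty(M)$, which is residual.) Fix $f \in \m G$ and write $\tilde L(x,v) = \tfrac12 e^{f(x)} g_x(v,v)$, observing that $\tilde L$ is again the kinetic energy of the conformal metric $\tilde g = e^f g$. It then suffices to bound the cardinality of $(\overline{\m A}_{\tilde L}(c), \overline\delta_c)$ for each $c \in H^1(M,\R)$, and I would split the argument according to whether $c$ is trivial.

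For $c = 0$ I invoke the observation already recorded in the introduction: for a Riemannian (kinetic-energy) lagrangian the projected Aubry set of the zero class is all of $M$ and the quotient Aubry set is a single point. Hence $(\overline{\m A}_{\tilde L}(0), \overline\delta_0)$ has exactly one element, which is trivially finite.

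For a non-trivial class $c$, the key step is to produce an injection from the quotient Aubry set into the set of ergodic $c$-minimizing measures of $\tilde L$. Recall (see \cite{Contreras:1997}) that the elements of $\overline{\m A}_{\tilde L}(c)$ — the projected static classes — are pairwise disjoint subsets of $M$, and that each of them supports at least one ergodic $c$-minimizing measure. Since an ergodic measure is concentrated on a single static class, distinct static classes carry distinct ergodic measures; choosing one such measure per class yields the desired injection. Consequently the number of static classes is at most the number of ergodic $c$-minimizing measures, which by Theorem~\ref{Th. unique  miminizing measure} is bounded by $1 + \dim H^1(M,\R)$. Thus $(\overline{\m A}_{\tilde L}(c), \overline\delta_c)$ has at most $1 + \dim H^1(M,\R)$ elements.

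Combining the two cases gives finiteness for every $c \in H^1(M,\R)$, as claimed. The only point requiring genuine care — and the place I would expect scrutiny — is the passage from ``each static class supports an ergodic minimizing measure'' to injectivity into the set of ergodic measures; this rests on the disjointness of the static classes together with the fact that an ergodic measure cannot be split across two of them. Everything else is a direct appeal to Theorem~\ref{Th. unique  miminizing measure} and to the structural facts already assembled in the introduction, so I do not anticipate a substantial obstacle beyond stating this reduction cleanly.
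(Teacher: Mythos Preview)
Your proposal is correct and follows exactly the route the paper intends: the corollary is stated without a separate proof, the paper simply noting that it follows from Theorem~\ref{Th. unique  miminizing measure} together with the observations that static classes are disjoint and each supports an ergodic minimizing measure (\cite{Contreras:1997}), and that for $c=0$ the quotient Aubry set of a kinetic-energy lagrangian is a single point. Your explicit treatment of the case $\dim H^1(M,\R)=0$ (where Theorem~\ref{Th. unique  miminizing measure} does not apply but only $c=0$ occurs) is a small clarification the paper leaves implicit.
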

 
\section{Preliminary results}

\subsection{ The Aubry-Mather theory for  Tonelli lagrangian systems}
\label{Review of the Mather-Aubry Theory}
 
Let us  recall the   concepts of the Mather  and Aubry sets   for  (autonomous)  Tonelli  lagrangian introduced by J. Mather  in \cite{Mather:1991}, \cite{Mather:1993} 
and Ma\~n\'e in \cite{Ma7} respectivelly.

Let $ L : T M\rightarrow \R $ be a smooth { \it Tonelli lagrangian} 
defined in the tangent bundle $ TM$ of a closed smooth riemannian manifold $(M,g) $, i.e.,  $L(x,v) $ satisfy the two conditions:
\begin{itemize}
\item Convexity: for each fiber $ T_x M $, the restriction $
L(x,\cdot):T_xM \to \R $ has positive defined  Hessian,  and, 
\item  Superlinearity : 
$\lim_{\| v\|\rightarrow \infty} \frac{L(x,v)}{\|v\|} = \infty ,$
uniformly  in $ x \in M$. 

\end{itemize}
The  action of $ L $ over an
absolutely continuous curve $ \gamma :[a,b]\rightarrow M $ is
defined by:
$$
A_L(\gamma) = \int_a^b L(\gamma(t),\dot \gamma(t))\ dt.$$
The  extremal  curves  for the action are given by  solutions of the
{\it Euler-Lagrange equation} that in local coordinates can be
written as:
\begin{equation}
\label{E-L}
\frac{\partial L}{ \partial x}  - \frac{d}{dt }  \frac{\partial L}{\partial v} =0 .
\end{equation}
 Since $ L $ is
convex and $ M $ is compact,  the Euler-Lagrange equation
defines   a complete flow $ \phi_t: T M \rightarrow T M $, that is  called    {\it lagrangian flow} of $L $ and is defined by
$$
\phi_t ( x, v) = ( \gamma(t), \dot{\gamma}(t)), $$
where $ \gamma: \R \rightarrow M $ is the solution of (\ref{E-L}) with initial
conditions $ \gamma(0)= x $ and $ \dot{\gamma}(0) = v$.

The  {\it energy function} $  E_L : TM \rightarrow \R $ is defined as
\begin{equation}
\label{energy function}
 E_{L}(x,v):=\frac{\partial  L}{\partial v} (x,v)\cdot v -L(x,v)
\end{equation}
Any non-empty  energy level  $ E_L^{-1}(k) \subset TM   $ 
is compact and  invariant by the lagrangian flow.

\smallskip

We denote by $ \mathfrak B(L)$  the set of  all  Borel  probability measures   that are invariant by the  Euler-Lagrange flow of $ L$. 
Given a closed 1-form $ \omega $ on $ M$, 
consider the deformed lagrangian  $ L_\omega: TM \to \R$ defined by 
$$  L_\omega (x,v)= L(x,v) -  \omega_x(v) . $$
Since  $ d \omega =0$, 
the  lagrangians  $L_\omega$ and $ L$ have the same Euler Lagrange flow.
We say that  $ \mu \in \mathfrak B(L)$  is  a  {\it c-minimizing measure} of $ L$ 
if 
\begin{equation}
\label{minimizing measure}
 \int_{TM} L-\omega \ d\mu = \min\left\{ 
 \int_{TM} L-\omega\ d\nu
  :  \nu \in \mathfrak B(L)\right\},
\end{equation}
where $  c =  [\omega] \in H^1(M,\R)$.  
Let $\M_L(c) \subset \mathfrak B(L) $ be the set of all c-minimizing measures 
 (it only depends on the cohomology class $c$).  
The ergodic components  of a c-minimizing measure are also  c-minimizing measures,
so the set $\M_L(c) $ is a simplex whose extremal measures are ergodic c-minimizing measures.  The {\it$ \alpha$-function}, defined  as 
\begin{equation}
\label{alpha function}
 \alpha(c)=
-  \min\left\{ 
 \int_{TM} L-\omega\ d\nu
  :  \nu \in \mathfrak B(L)\right\},  
\end{equation}
 is a convex and superlinear function $ \alpha:H^1(M,\R) \to \R $. For each $ c \in  H^1(M,\R)$, we define the {\it Mather set  of cohomology class $ c$} as: 
$$  \tilde{\m M_L}(c)= \overline{\bigcup_{\mu \in \M_L (c)} \mbox{Supp}( \mu )} .$$
We set $ p(\tilde{\m M_L}(c))={\m M_L}(c)$, and call it the {\it projected Mather set}, where $ p:TM \to M $ denotes the canonical projection. The celebrated {\it graph theorem} proved by Mather  in \cite{Mather:1991}, asserts that $ \tilde{\m M}(c) $ is non-empty, compact,  invariant by the Euler-Lagrange flow  and $ p|_{\tilde{\m M_L}(c)}:   \tilde{\m M_L}(c) \to  {\m M_L}(c) $ is a bi-Lipschitz homeomorphism. 
In \cite{Carneiro:1995}  M. J. Carneiro proved (in the autonomous case), 
that  this set is contained in the energy level $ \m E(c):= E_L^{-1}(\alpha(c))$.
   
   Following Mather in \cite{Mather:1993}, for $ t>0$ and $ x,y \in M$, define 
 the {\it  action potential} for the  lagrangian deformed by a closed  1-forma $\omega $ as: 
 $$  h_\omega(x,y,t)=\inf \left\{   \int_0^t L(\gamma(s),\dot \gamma(s)) - \omega_{\gamma(s)} (\dot \gamma(s))  \ ds \right\}, $$
 where the infimum is taken over all absolutely  continuous curves $  \gamma:[0,t]\to M $ such that $ \gamma(0)=x$ and $ \gamma(t)=y$. The infimum is in fact a minimum 
 by  Tonelli's theorem. 

We define the {\it Peierls barrier} for the  lagrangian $ L-\omega $ as the function $ h_\omega: M \times M \to \R$ given by: 
\begin{equation*}
\label{Peiers barrier}
h_\omega(x,y) = \liminf_{t \to +\infty} \left\{ h_\omega (x,y,t) + \alpha([\omega]) t\right\}.
\end{equation*}
Define the {\it projected Aubry set for the cohomology class} $ c=[\omega]\in H^1(M,\R)$ as the set
\begin{equation*}
\label{projectet aubry set}
\m A_L(c)=\left\{ \ x \ \in M: \ h_\omega(x,y) = 0 \ \right\}.
\end{equation*}
By symmetrizing $ h_\omega $, we define the  {\it semidistance}   $\delta_c $ on $ \m A_L(c)$:
\begin{equation*}
\label{Mather distance }
\delta_c(x,y)=  h_\omega(x,y)+ h_\omega(y,x).
\end{equation*}
This function $\delta_c$ is non-negative and satisfies the triangle inequality. 

Finally, we define the {\it quotient Aubry set of the cohomology class }$ c=[\omega]\in H^1(M,\R)$, denoted by $( \overline{\m A_L}(c), \overline{\delta_c})$,  to be the metric space obtained by identifying two points $ x,y \in \m A_L(c) $ if their semidistance $\delta_c(x,y)$ vanishes. 

\bigskip

\subsection{Abstract  Results} 
In order  to state the  abstract theorem proved by  Bernard and Contreras  in \cite[Theorem~5]{Bernard:2008b}, we need to be given:  
\begin{itemize}
\item three topological vector spaces $ E, F, G$,
\item a continuous linear map $ \pi: F \to G $,
\item a bi-linear pairing, 
$ \langle \cdot , \cdot \rangle: E \times G \to \R$.
\item two metrizable convex compact subsets $ H \subset F$ and $ K \subset G $, 
such that $ \pi ( H) \subset K$.
\end{itemize} 
And we suppose that:
\begin{itemize}
\item[(i)]The map $ E \times K  \ni (u, \nu) \mapsto \langle u, \nu \rangle $ is continuous. 
\item[(ii)] The compact $ K $ is separated by $ E$. This means that, if $ \mu $ and $\nu $ are two different elements of  $K$, then there exists  $ u \in E$ such that $ \langle u, \mu \rangle \not= \langle u ,\nu \rangle$. 
\item[(iii)] $E $ is a Frechet    space. It means that $E$ is a topological vector space whose  topology is defined by a translation invariant metric, and that $ E $ is complete for this metric.  
\end{itemize}
So, given a linear functional $ L: H \to \R $, we denote by
$$ M_H(L) := \left\{\mu \in H : L(\mu) = \min_{\nu\in H} L(\nu) \right\}$$ 
and by $ M_K(L) $ the image $ \pi(M_H(L))$. These are compact convex subsets of $ 
H$ and $K$.

In this setting  Bernard 
and Contreras    proved the following abstract result: 
\begin{thm}[Th.~5 in \cite{Bernard:2008b}]
\label{abstract result}
For every finite dimensional affine subspace $ A \in H^* $, there exists a residual subset $ \m O(A) \subset E $ such that, for all $ u \in \m O(A) $ and all $ L \in A$, we  have that
$$ \dim M_K(L-u) \leq  \dim A.$$
\end{thm}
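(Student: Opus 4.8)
The plan is to reduce the statement to a dimension estimate for the singular set of a finite-dimensional convex function and then package this estimate into a residual set by a Baire category argument built on a countable separating family in $E$. First I would set up the relevant concave value function: for $(L,u)\in A\times E$ put
$\Phi(L,u)=\min_{\mu\in H}\bigl(L(\mu)-\langle u,\pi(\mu)\rangle\bigr)$,
the minimum being attained since $H$ is compact and the pairing is continuous (assumption (i)). As each integrand $L(\mu)-\langle u,\pi(\mu)\rangle$ is affine in $(L,u)$ for fixed $\mu$, the function $\Phi$ is concave and continuous. The first key point is the identification of the minimizer image with a superdifferential: a measure $\mu\in M_H(L-u)$ is exactly one whose supporting affine function realizes $\Phi$ at $(L,u)$, so that $M_K(L-u)=\pi(M_H(L-u))$ is recovered, through the pairing, as the negative of the partial superdifferential of $\Phi$ in the $u$-variable. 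I will only need this along finite-dimensional slices.

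Next I would exploit separability. Since $K$ is compact metrizable and is separated by $E$ (assumption (ii)), choose $(e_j)_{j\ge 1}\subset E$ so that the functionals $\langle e_j,\cdot\rangle$ separate the points of $K$; then $\iota\colon K\to\R^{\N}$, $\iota(\nu)=(\langle e_j,\nu\rangle)_j$, is an injective continuous affine embedding, and for every convex compact $S\subset K$ one has $\dim S=\sup_n\dim\iota_n(S)$ (because a linear map injective on a convex set preserves its dimension, and this dimension is witnessed by finitely many coordinates), where $\iota_n$ truncates to the first $n$ coordinates. Writing $d=\dim A$, the desired bound $\dim M_K(L-u)\le d$ is thus equivalent to $\dim\iota_n(M_K(L-u))\le d$ for all $n$. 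Fixing $n$ and a base point $u_0$, I set $\Psi(L,s)=\Phi\bigl(u_0+\sum_{j\le n}s_je_j\bigr)$ on $A\times\R^n$; by the identification above, $\dim\iota_n(M_K(L-u))$ equals the dimension of the partial superdifferential $\partial_s\Psi(L,s)$ at the corresponding $s$.

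The crux is then a finite-dimensional lemma for the concave function $\Psi$ on the $(d+n)$-dimensional space $A\times\R^n$: the set $\{s\in\R^n:\exists L\in A,\ \dim\partial_s\Psi(L,s)\ge d+1\}$ has Lebesgue measure zero. Since $\partial_s\Psi$ is the linear projection of the full superdifferential onto the $s$-factor, $\dim\partial_s\Psi\le\dim\partial\Psi$, so any such $(L,s)$ lies in the singular set $\Sigma_{d+1}=\{z:\dim\partial\Psi(z)\ge d+1\}$. By the structure theory of singular sets of convex functions, $\Sigma_{d+1}$ is contained in a countable union of Lipschitz graphs of dimension $(d+n)-(d+1)=n-1$, hence its image under the $1$-Lipschitz projection onto $\R^n$ has Hausdorff dimension at most $n-1$ and therefore measure zero. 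This is the step I expect to be the main obstacle: one must invoke (or reprove at the needed generality) the $(n-1)$-rectifiability of the locus where a convex function has a $(d{+}1)$-dimensional subdifferential, and one must observe that the quantifier ``$\exists L$'' is harmless because it only ranges over the $d$ extra coordinates that are quotiented out in the dimension count — this is precisely where the bound $\dim A$, rather than $0$, originates.

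Finally I would assemble the residual set. For each $n$, let $\m B_n=\{u\in E:\exists L\in A,\ \dim\iota_n(M_K(L-u))\ge d+1\}$. Upper semicontinuity of the minimizer map $(L,u)\mapsto M_H(L-u)$ (Berge's maximum theorem), together with an exhaustion of the finite-dimensional $A$ by compacts, shows that $\m B_n$ is contained in an $F_\sigma$ set each of whose slices parallel to $\mathrm{span}(e_1,\dots,e_n)$ has measure zero by the lemma; a closed set with measure-null slices in these directions has empty interior, so each closed piece is nowhere dense and $\m B_n$ is meager. Since $E$ is Fréchet (assumption (iii)) it is a Baire space, hence $\m O(A):=E\setminus\bigcup_n\m B_n$ is residual. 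For $u\in\m O(A)$ and every $L\in A$ one gets $\dim\iota_n(M_K(L-u))\le d$ for all $n$, whence $\dim M_K(L-u)\le d=\dim A$, as required.
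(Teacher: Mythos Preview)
The paper you are working from does not prove this theorem at all: Theorem~\ref{abstract result} is quoted verbatim from \cite{Bernard:2008b} (their Theorem~5) and is used in Section~3 as a black box, so there is no ``paper's own proof'' to compare against. Your proposal is, in outline, the Bernard--Contreras argument itself: introduce the concave value function, identify $M_K(L-u)$ with (minus) a superdifferential, reduce via a countable separating family $(e_j)$ to finite-dimensional slices, invoke the rectifiability of the singular strata of a concave function (Alberti--Ambrosio--Cannarsa type), and finish by Fubini plus Baire. So strategically you are on the right track.

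Two points deserve care if you want this to stand on its own. First, the claim ``$\partial_s\Psi$ is the projection of the full superdifferential'' is not a general fact about concave functions; it holds here because $\Psi$ is an infimum of \emph{affine} functions indexed by $\mu\in H$, so the superdifferential is the convex hull of the active gradients and the partial one is literally its projection. You should say this explicitly. Second, the assertion that $\m B_n$ is $F_\sigma$ (or contained in one with null slices) is not immediate from Berge's theorem alone: upper semicontinuity of $(L,u)\mapsto M_H(L-u)$ does not make ``$\dim\iota_n(M_K(L-u))\ge d+1$'' manifestly closed. In \cite{Bernard:2008b} this is handled by working instead with the condition that $\iota_n(M_K(L-u))$ contains a simplex of volume at least $\varepsilon$, which \emph{is} closed by upper semicontinuity and compactness, and then taking a countable union over $\varepsilon\to 0$. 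With these two clarifications your sketch matches the published proof.
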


  \subsection{Holonomic measures}
  \label{holonomic measures}
Let $ \m C_q$ be the set of continuous  functions $ f : TM \to \R $ 
with growth at most quadratic, i.e. 
$$ \| f\|_q:= \sup_{(x,v)\in TM } \frac{|f(x,v)|}{1+g_x(v,v)^2 } < \infty .$$
endowed with the  norm defined above.
 Let $\m C^*_q$ be the dual space of $\m C_q$ endowed with the weak* topology. If $u\in C^0(M)$ then $u$ is bounded and hence the function 
$(x,v)\mapsto u(x)\, g_x(v,v)$ is in $\m C_q$. Therefore the function 
$\m C_q^*\to\R$ defined by
\begin{equation}\label{pigcont}
 \mu \longmapsto \int u(x)\, g_x(v,v) \, d\mu
\end{equation}
is continuous.

For each $C^1$ curve $ \gamma: \R \to M $  periodic,   with period $  T> 0 $,  we define the probability   
$ \mu_\gamma  $ on the Borel $\sigma$-algebra  of $TM $ by
 $$ \int \ f \ \mu_\gamma  = \frac{1}{T}\int_0^T f(\gamma(t),\dot \gamma(t)) dt , $$ for 
 all continuous functions $ f : TM\to \R $ with compact support.
Let $ \Gamma$  be  the set of probabilities of this form. 
Observe that $\dot\gamma(t)$ is bounded and hence 
 $ \Gamma $ is naturally embedded in $\m C_q^*$.  
Finally,  the set  of   {\it $L^2$ holonomic probabilities} $\m H_2 $  is the closure of $ \Gamma $ in 
$\m C_q^*$.  

\smallskip
 
Let $ L:TM \to M $ be a Tonelli lagrangian and $ \omega$ be a closed 1-form on $ M$. 
It follows from  Birkhoff Theorem (cf. \cite[prop. 1.1(b)]{Mane:1996a}) and Carneiro's result (cf. \cite[cor. 1]{Carneiro:1995}), that  
$ \mathfrak M_L ([\omega]) \subset  \m H_2$.

Ma\~n\'e defines the set of $L^1$ holonomic measures $\m H_1$
 in  the same fashion as above but using the norm
 $$
  \| f\|_\ell:= \sup_{(x,v)\in TM } \frac{|f(x,v)|}{1+\sqrt{g_x(v,v)} } < \infty .
 $$
 In   \cite[prop. 1.2 and 1.3]{Mane:1996a})
Mañé proved the existence of measures $ \mu \in \m H _1$ such that:
$$ \int_{TM} L - \omega \ d\mu = \min\left\{\int_{TM} L- \omega  \ d\nu : \nu \in \m H_1 \right\} $$
and that these measures are invariant by the Euler-Lagrange flow. Thus they are in 
$\mathfrak M_L([\omega]) \subset\m H_2$. Therefore,  the concept of c-minimizing measures    can be reformulated by taking the minimum of the action on the set of the  $ \m L^2$ holonomic probabilities. It is  
\begin{equation}
\label{Mather set}
 \mathfrak M_L(c) = \left\{ \mu \in \m H_2 : \int_{TM} L-\omega \ d\mu = \min_{\nu \in \m H} \int_{TM} L-\omega \ d\nu \right\} .
\end{equation}  
 The important goal is that the set  $\m H_2$  does not depend on the lagrangian.

\bigskip

\section{On the minimizing measures of geodesic flows}

We will apply  Theorem~\ref{abstract result} in the family of  lagrangians given by  all kinetic energies corresponding to all $C^\infty$-rienannian metrics on $ M$, that are conformally equivalent  to $g$.

 We consider the sets $ \m C_q, \m C_q^* $ and $ \m H_2$, as in the 
 section~\ref{holonomic measures}, and  we define the following setting: 
\begin{itemize}
\item  $ E = (C^\infty(M),d)  $ the metric space, where 
$$ d( u,v ) = \|u-v\|_\infty = \sum_{k\in\N} \frac{1}{2^k} \arctan ( \| u-v\|_{C^k} ).
$$ 
This metric space is complete, hence any residual subset of E is dense.

\item 
$F = \m C_q^* $ the vector space of 
continuous linear functionals $\mu:\m C_q\to\R$ endowed
with the weak* topology. Observe that $F=\m C_q^*$ is the set of 
finite Borel signed measures on $ TM $ such that  
$$ \int_{TM} g(v,v)\  d\mu < \infty. $$
We have that $\m H_2\subset F$.

\item $ G= (C^0(M))^* $ the vector space of continuous linear functions $\nu: C^0(M) \to\R$ endowed
with the weak* topology.

\item  The bilinear pairing $ \langle \cdot, \cdot \rangle: E \times G \to \R $ is defined by $  \langle u, \nu \rangle= \nu(u)$.

\item The  linear map $  \pi_g: F \to G $ is given by   
\begin{eqnarray*}
 \pi_g (\mu) :  C^0(M) &\longrightarrow & \R \\
 u &\longmapsto & 
  \pi_g (\mu)(u)= \int_{TM} u(x)\  g_x(v,v) \ d\mu .
\end{eqnarray*}
Note that 
$$|\langle u, \pi_g (\mu)\rangle| \leq \| u \|_{C^0} \int_{TM} g_x(v,v)\  d\mu ,$$ then
 $\pi_g(\mu)$ is continuous and hence $\pi_g(\mu)\in G$. 
 The argument in~\eqref{pigcont} 
 shows that for any $u\in E$, the function $F\ni \mu\mapsto \langle u, \pi_g (\mu)\rangle$
 is continuous. Therefore the map $\pi_g$ is continuous in the weak* topology of 
 $G$. Also the bilinearity implies that the map 
 $(u,\mu)\mapsto \langle  u, \pi_g (\mu)\rangle$ is continuous.

\item $n \in \N.$ 

\item The compact $ H_n \subset \m H_2 \subset F $ is the set of holonomic probability measures which are supported on the compact $ B_n=\{ (x,v) \in TM : g_x (v,v) \leq 2n\}.$ 

\item The compact $ K_n=\pi_g(H_n) \subset G $.  By duality we have that $ K_n $ is separated  by $ E $.
\end{itemize}   
 
 \smallskip
 
 Let $ L:TM \to \R $ be the riemannian lagrangian corresponding to $g$
and let $ \omega $ be  a closed 1-form on $ M $.   
Then $ L-\omega $ defines a   continuous linear functional  
$ A_c: \m H_2\to \R  $
 given by integration, it is  $$\nu \mapsto 
 A_c(\nu) =  \int_{TM}  L - \omega \  d\nu ,$$ 
where  $ [\omega]=c \in H^1(M,\R) $. 

Let $ A^n_{c,u}: H_n \to \R $ be the action defined as
$$ A^n_{c,u}(\mu)=
A_c(\nu)  + \langle  u, \pi_g (\mu)\rangle = \int_{TM} (L-\omega) +  \pi^*_g(u) \ d\mu $$
 where $  \pi^*_g(u)(x,v):=   u(x)\  g_x(v,v)$. We   denote by $ M_{H_n}(c,u) $  the set of measures $ \mu \in  H_n $ which  
 minimize the action  $ A^n_{c,u}: H_n \to \R $.

Let $ A $ be the affine subspace of continuous linear functionals on $\m H_2$ of the kind  
$ A_c$, 
with $ c\in H^1(M,\R)$. 
By applying  Theorem \ref{abstract result}, we obtain a residual 
subset $ \m O(A,n) \subset E $ such  that  
$ A_c  \in A  $ and $ u \in  \m O(A,n) $ 
imply that
$ \dim \pi_g (  M_{H_n}( c,u))  \leq \dim A.$

We set $$ \m O(A) = \bigcap_{n\in \N} \m O(A,n) .$$ By the Baire 
property $ \m O(A) $ is residual subset of $ E$ and we have that, if  
$ 
 A_c \in A , \ n\in\N $ and $ u \in  \m O(A) $, then:
$$ \dim \pi_g (  M_{H_n}( c,u))  \leq \dim A.$$

\bigskip 

Note that, if $ u(x) > -1/2 $ for all $ x \in M$,  then $\tilde L(u):=L +  \pi^*_g(u) $ is the   lagrangian corresponding to kinetic energy for the perturbed  riemannian metric $$ \tilde{g}(u):=  e^{\ln(1+2u)} g = (1+2u) g.$$ 
 Then, given a closed 1-form $ \omega $,
the lagrangian flow of  $ (L- \omega ) + \pi^*_g(u)= \tilde{L}(u) -\omega$  is the geodesic flow of the  metric   $ \g(u)$.
  
\smallskip
 
We recall that  by the Mather's graph theorem \cite{Mather:1991}, the union of the supports of   all
 $c$-minimizing measures of a Tonelli lagrangian  is a  compact set.  Therefore, by 
 (\ref{Mather set}), there   exists $\ 
m=m(g,u,c) \in \N $ such that
$$  \mathfrak M_{\tilde L(u)}(c)= M_{H_n}(c,u), \ \ \forall n \geq m  .$$

\smallskip
 
We  need  to compare  the dimension of the  sets $ \mathfrak M_{\tilde{L}(u)} (c) $ of minimizing measures for the 
Tonelli lagrangian 
$\tilde L(u) -\omega$  (with $[\omega]=c$)  with the dimension of the sets  $\pi_g (  
M_{H_n}(c,u))$, for 
$ A_c \in A$  and $ u \in \m G(A):= \m O(A)\cap \{ \ u \in C^\infty(M): \  u(x) > -1/2 \} $. 

\smallskip

 For the zero cohomology class, it is easy to see that  
  $ \mathfrak M_{L}(0) $ 
contain  all the  Dirac measures supported in a point of the zero section of $ TM $, 
i.e 
$ \delta_{(x,0)} \in \mathfrak  M_{L}(0)$ for all  $ x \in M$. On the other hand,  for each 
measure $ \delta_{(x,0)} \in \mathfrak M_{ L}(0)$,  by the definition of the map  
$ \pi_g $, for any $u \in E $  we have 
$$ \langle u,\pi_g (\delta_{(x,0)})\rangle = \int_{TM} u(x)\  g_x(v,v) \ d\delta_{(x,0)} =u(x)g(0,0)=0.$$
Then  $ \pi_g (\mathfrak M( L)) = \pi_g ( M_{H_n}(0,u)) = \{ 0\}\subset G=C^0(M)^*  $. 

 \smallskip
  
For any non-trivial cohomology class, we prove the  following two lemmas that  complete the proof of the Theorem~\ref{Th. unique  miminizing measure}.

\begin{lem}
\label{alfa0}
For  $L(x,v)=\frac 12 \, g_x(v,v)$
we have that $\alpha (c)>0$ if $c\ne 0$.
\end{lem}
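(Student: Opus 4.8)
The plan is to rewrite $\alpha(c)$ in terms of rotation vectors and then exhibit a single invariant measure that already forces strict positivity. For $\mu\in\mathfrak B(L)$ let $\rho(\mu)\in H_1(M,\R)$ be its rotation vector, determined by $\langle[\eta],\rho(\mu)\rangle=\int_{TM}\eta_x(v)\,d\mu$ for all closed $1$-forms $\eta$. Choosing $\eta=\omega$ with $[\omega]=c$ in the definition~(\ref{alpha function}) of the $\alpha$-function, and writing $L=\tfrac12 g_x(v,v)$, gives
\begin{equation*}
\alpha(c)=\max_{\mu\in\mathfrak B(L)}\left(\langle c,\rho(\mu)\rangle-\tfrac12\int_{TM}g_x(v,v)\,d\mu\right).
\end{equation*}
Testing this on any Dirac measure $\delta_{(x,0)}$ supported on the zero section --- which is $\phi_t$-invariant, has $\rho(\delta_{(x,0)})=0$ and zero action --- shows that the bracketed quantity attains the value $0$, so $\alpha(c)\ge0$ for every $c$. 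It therefore suffices to produce one measure for which the bracket is positive.

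Since $c\ne0$, the element $c\in H^1(M,\R)=\mathrm{Hom}(H_1(M,\R),\R)$ is a nonzero functional, and as the integral classes generate $H_1(M,\R)$ there is $h_0\in H_1(M,\mathbb{Z})$ with $\langle c,h_0\rangle\ne0$. Every first homology class is represented by a smooth loop, so $h_0=[\sigma]$ for some loop $\sigma$; because $\langle c,h_0\rangle\ne0$, the class $h_0$ is nontrivial and $\sigma$ is not null-homotopic. By the classical existence theorem for closed geodesics in a nontrivial free homotopy class on a closed manifold, that class contains a closed geodesic $\gamma$, which I take to be unit speed of length $\ell$. Being freely homotopic to $\sigma$, it is homologous to $\sigma$, so the associated invariant probability measure $\mu_\gamma\in\mathfrak B(L)$ satisfies
\begin{equation*}
\langle c,\rho(\mu_\gamma)\rangle=\tfrac1\ell\int_\gamma\omega=\tfrac1\ell\langle c,h_0\rangle\ne0,\qquad \int_{TM} g_x(v,v)\,d\mu_\gamma=1 .
\end{equation*}

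The last step uses that the geodesic flow preserves speed. For $\lambda>0$ the curve $t\mapsto\gamma(\lambda t)$ is a periodic orbit of $\phi_t$ of constant speed $\lambda$, whose uniform measure $\mu_\lambda$ satisfies $\rho(\mu_\lambda)=\lambda\,\rho(\mu_\gamma)$ and $\int_{TM} g_x(v,v)\,d\mu_\lambda=\lambda^2$. Writing $b=\langle c,\rho(\mu_\gamma)\rangle$ and, after possibly replacing $\gamma$ by its time reversal (which preserves the action and changes the sign of $b$), assuming $b>0$, the reformulation gives
\begin{equation*}
\alpha(c)\ \ge\ \langle c,\rho(\mu_\lambda)\rangle-\tfrac12\int_{TM} g_x(v,v)\,d\mu_\lambda\ =\ \lambda b-\tfrac12\lambda^2 .
\end{equation*}
Maximizing the concave quadratic on the right at $\lambda=b$ yields $\alpha(c)\ge b^2/2>0$, as desired. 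The only imported, non-computational ingredient is the existence of a closed geodesic carrying a homology class not annihilated by $c$ (together with the routine fact that the rotation vector and the action scale like $\lambda$ and $\lambda^2$ under the speed change); once that measure is in hand, strict positivity is a one-line optimization, so I expect this existence/realization step to be the only real obstacle.
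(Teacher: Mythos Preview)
Your argument is correct, and the heart of it---rescale the speed of a closed curve with nonzero $\omega$-period and optimize the resulting quadratic---is exactly the idea the paper uses. The difference lies in which measures you test against. The paper takes an arbitrary $C^1$ loop $\gamma$ with $\oint_\gamma\omega\ne 0$, forms the holonomic probability $\mu_{\gamma_s}$ for the rescaled curve $\gamma_s(t)=\gamma(st)$, and appeals to the equivalence between the invariant-measure and holonomic definitions of $\alpha$ (Section~\ref{holonomic measures}) to get $\alpha(c)\ge -A_{L-\omega}(\gamma_s)/T = bs - as^2$. You instead stay inside $\mathfrak{B}(L)$ by invoking the existence of a closed geodesic in a nontrivial free homotopy class, so that each rescaled $\mu_\lambda$ is genuinely flow-invariant, and you phrase the linear term via rotation vectors. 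Your route is slightly heavier---it imports the closed-geodesic existence theorem and the rotation-vector formalism---while the paper's is a two-line computation once the holonomic characterization is on the table; on the other hand, yours works directly from the definition~\eqref{alpha function} without that characterization. Either way, the optimization step and the conclusion are identical.
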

\begin{proof}
Let $c\ne  0$ be a non-trivial cohomology class and $\omega$ a closed 1-form with $[\omega]=c$.
Then there exists a $C^1$ curve $\gamma$ on $M$ such that 
$\oint_\gamma \omega \ne 0$. Parametrizing $\gamma$ in opposite direction if necessary,
 we can
assume that $\oint_\gamma \omega =-b<0$.
Since $\gamma$ is not a fixed point,
we have that $a=A_L(\gamma)>0$.
For $s>0$ define $\gamma_s(t):=\gamma(st)$.
Then
$$
A_{L-\omega}(\gamma_s) = s^2 A_L(\gamma) + s\oint_\gamma \omega
= a s^2 - bs <0 \qquad \text{for}\qquad 0<s<{\textstyle \frac ba}.
$$
Let $\overline s= \frac b{2a}$, then from \eqref{alpha function} we have that
$$
\alpha(c)\ge - \int( L-\omega)\, d\mu_{\gamma_{\overline s}}
=- A_L(\gamma_{\overline s}) + \oint_{\gamma_{\overline s}}\omega 
=-a \overline{s}^2+b\overline{s}>0.
$$
\end{proof}

\medskip

\begin{lem}
\label{injective}
Let $ L $ be the lagrangian of a riemannian metric $ g$. Let $c \in H^1(M,\R )$ be   
a non-trivial cohomology class.  Then the  linear map
$$ \pi_g|_{ \mathfrak M_L(c )} : \mathfrak M_L(c ) \longrightarrow G $$
is injective.
\end{lem}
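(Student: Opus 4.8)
The plan is to show that on the Mather set $\tilde{\m M_L}(c)$ the quantity $g_x(v,v)$ is a \emph{positive constant}, which forces $\pi_g$ to coincide, up to a positive scalar, with the push-forward of measures under the projection $p\colon TM\to M$; injectivity then drops out of the graph theorem.

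First I would compute the energy function of the riemannian lagrangian. Since $L(x,v)=\tfrac12 g_x(v,v)$ one has $\partial_v L(x,v)\cdot v=g_x(v,v)$, so by \eqref{energy function}
$$
E_L(x,v)=g_x(v,v)-\tfrac12 g_x(v,v)=\tfrac12\, g_x(v,v).
$$
By Carneiro's theorem every $c$-minimizing measure is supported in the energy level $E_L^{-1}(\alpha(c))$, and hence on $\tilde{\m M_L}(c)$ we have the pointwise identity $g_x(v,v)=2\alpha(c)$. By Lemma~\ref{alfa0}, since $c$ is non-trivial we have $\alpha(c)>0$, so the constant $2\alpha(c)$ is strictly positive.

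Next I would use this to identify $\pi_g$ with the projection $p_*$. For any $\mu\in\mathfrak M_L(c)$ the support of $\mu$ lies in $\tilde{\m M_L}(c)$, so for every $u\in C^0(M)$,
$$
\langle u,\pi_g(\mu)\rangle=\int_{TM} u(x)\,g_x(v,v)\,d\mu
=2\alpha(c)\int_{TM} u(x)\,d\mu
=2\alpha(c)\int_M u\,d(p_*\mu).
$$
Thus $\pi_g(\mu)=2\alpha(c)\,p_*\mu$ as elements of $G=C^0(M)^*$. Because $2\alpha(c)\neq 0$, the restriction $\pi_g|_{\mathfrak M_L(c)}$ is injective exactly when the push-forward $p_*$ is injective on measures carried by the Mather set.

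Finally, injectivity of $p_*$ on such measures follows directly from Mather's graph theorem: the projection $p|_{\tilde{\m M_L}(c)}\colon \tilde{\m M_L}(c)\to \m M_L(c)$ is a bi-Lipschitz homeomorphism, so it has a continuous inverse $\sigma$. If $\mu_1,\mu_2\in\mathfrak M_L(c)$ satisfy $p_*\mu_1=p_*\mu_2$, applying $\sigma_*$ and using $\sigma\circ p=\mathrm{id}$ on $\tilde{\m M_L}(c)$ yields $\mu_1=\mu_2$. The decisive step — the one that makes the whole argument run — is the observation, from Carneiro's theorem together with Lemma~\ref{alfa0}, that $g_x(v,v)$ is a \emph{nonzero} constant on the Mather set; this is precisely where non-triviality of $c$ enters, since for $c=0$ the minimizing measures sit on the zero section and $\pi_g$ annihilates them, so injectivity fails.
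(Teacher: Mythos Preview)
Your proof is correct and follows essentially the same approach as the paper: both use Carneiro's theorem to get $g_x(v,v)=2\alpha(c)$ on the Mather set, Lemma~\ref{alfa0} for $\alpha(c)\neq 0$, and Mather's graph theorem to pass from measures on $TM$ to measures on $M$. Your packaging via the identity $\pi_g(\mu)=2\alpha(c)\,p_*\mu$ and the inverse $\sigma_*$ is slightly more streamlined than the paper's, which works set-by-set with a Borel $\tilde B$, reduces to $\chi_B$, and then approximates by a smooth $u$; but the content is the same.
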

\begin{proof}

We need to  show  that if  $ \mu \not= \nu \in  \mathfrak M_L(c) $, then  $ \pi_g(\mu): E\to \R $ 
and
  $ \pi_g(\nu): E\to \R $ are two different  linear functionals, i.e there exist an element $ u \in C^\infty(M) $ such that:
\begin{equation}
\label{injective proof}
 \langle u,\pi_g ( \mu)\rangle = \int_{TM}  \pi^*_g(u)  \  d \mu \not=  \int_{TM}  \pi^*_g(u)  \  d \nu =\langle u,\pi_g ( \nu) \rangle 
 \end{equation}
Let $ p: TM \to M $ be the canonical projection. 
Let us  recall that the Mather set  $$\tilde {\m M_L}(c) = \bigcup_{ \mu \in \mathfrak M_L(c)} \supp(\mu) $$ is  compact  and 
is contained  in the energy level $$ \m E(c)= E_L^{-1}(\alpha(c)) =\{ (x,v) \in TM : g_x(v,v) = 2 \alpha(c)\  \},$$ see (\ref{energy function}) and (\ref{alpha function}). 
We also recall that,   by the graphic property \cite{Mather:1991}, the restriction    
$$ 
p|_{\tilde{\m M_L}(c)} : \tilde{\m M_L}(c) \to M 
$$ is injective and its inverse
 $ (p|_{\tilde{\m M_L}(c))})^{-1} $ is Lipschitz on the projected  Mather set $ \m M_L(c)$.  

Let $ \tilde B \subset TM $ be a Borel set such that $ \mu(\tilde B) \not= 
\nu (\tilde B) $.   Consider  $$ B := p (\tilde B   ) \subset M .$$ Observing that    $ \tilde B \subset p^{-1}(B) $,  we have $ \mu( \tilde B ) \leq \mu (p^{-1}(B)) $. Since $ \supp \mu \subset \m M_L(c) $, by the graphic property,
$$\mu\left( p^{-1} B \right) = \mu\left( p^{-1}( B) \cap \m M_L(c)\right) = \mu \left( (p\left|_{\m M_L(c)})^{-1} B \right.\right)  = \mu \left(\tilde B \cap \m M_L(c)\right) \leq \mu(\tilde B) .$$ 
Therefore $ \mu(\tilde{B}) = \mu\left( p^{-1}(B) \right)$. Repeating the same argument to the minimizing  measure $ \nu $,  we have that  $ \nu(\tilde{B}) = \nu\left( p^{-1}(B) \right)$.  Then
$$ \mu\left( p^{-1}(B) \right) \not= \nu\left( p^{-1}(B) \right) $$

Let $ \chi_B :M\to \R $ be  the characteristic function of the set 
$ B $. Then
$$\int_{TM} \chi_B (x) g_x(v,v) \ d \mu= \int_{p^{-1}(B)} g_x(v,v) \ d \mu =
\int_{p^{-1}(B)\cap \m M_L(c)}2  \alpha(c) \ d \mu = 2\alpha(c) \mu \left( p^{-1}(B)\right) 
$$
and similarly
$$\int_{TM} \chi_B (x) g_x(v,v) \ d \nu= \int_{p^{-1}(B)} g_x(v,v) \ d \nu =
\int_{p^{-1}(B)\cap \m M_L(c)} 2 \alpha(c) \ d \nu = 2\alpha(c) \nu \left( p^{-1}(B)\right)
$$

 Since by Lemma~\ref{alfa0}, $ \alpha(c)\not =0$,  we can choose 
  a $ C^\infty$-function $ u:M\to \R^+ $,
satisfying   
$$\int_{TM} u(x) g_x(v,v) \ d \mu \not=\int_{TM}  u(x) g_x(v,v) \ d \nu.$$
 This implies (\ref{injective proof}) and completes the proof of the lemma.

\end{proof}
    
\section*{Acknowledgments} J. A. G. Miranda  is very grateful to CIMAT for the hospitality an to CNPq-Brazil for the partial financial support.   
G. Contreras was partially supported by CONACYT, Mexico, grant 178838.
G. Contreras is very grateful to MSRI,  Berkeley, NFS Grant No. DMS-1440140 for the hospitality during Fall 2018 and to the Simons Foundation for financial support.

\bibliographystyle{amsalpha}
\providecommand{\bysame}{\leavevmode\hbox to3em{\hrulefill}\thinspace}
\providecommand{\MR}{\relax\ifhmode\unskip\space\fi MR }
\providecommand{\MRhref}[2]{%
  \href{http://www.ams.org/mathscinet-getitem?mr=#1}{#2}
}
\providecommand{\href}[2]{#2}

\end{document}